\theoremstyle{plain}
\newtheorem{thm}{Theorem}[section]
\newtheorem{dfn}[thm]{Definition}
\newtheorem{prop}[thm]{Proposition}
\newtheorem{conjecture}[thm]{Conjecture}
\newtheorem{problem}[thm]{Problem}
\newtheorem{THM}{Theorem}
\newcommand{\mc}{\mathcal}
\newcommand{\F}{\mc F}
\newcommand{\G}{\mc G}
\DeclareMathOperator{\sing}{sing}
\DeclareMathOperator{\Aut}{Aut}
\DeclareMathOperator{\Pic}{Pic}
\DeclareMathOperator{\Hilb}{Hilb}
\numberwithin{equation}{section}
\numberwithin{equation}{section}       
\title[Algebraic separatrices for  non-dicritical foliations]{Algebraic separatrices for  non-dicritical foliations on projective spaces of dimension at least four}
\author[J. V. Pereira]{Jorge  Vit\'orio Pereira}
\address{IMPA, Estrada Dona Castorina, 110, Horto, Rio de Janeiro,
Brasil}
\email{jvp@impa.br}
\subjclass{37F75,32D15} \keywords{Foliations, Algebraic Separatrices, Extension of subvarieties}
      \let\T@ref@orig\T@ref%
      \def\T@ref#1{\T@ref@orig{#1}\wrtusdrf{#1}}%
      \let\@refstar@orig\@refstar%
      \def\@refstar#1{\@refstar@orig{#1}\wrtusdrf{#1}}
      \DeclareRobustCommand\ref{\@ifstar\@refstar\T@ref}%
\begin{document}
\

\begin{abstract}
Non-dicritical codimension one foliations on projective spaces of dimension four or higher always have an invariant algebraic hypersurface. The proof relies on a strengthening of a result by Rossi on the algebraization/continuation of analytic subvarieties of   projective spaces.
\end{abstract}

\dedicatory{To  Felipe Cano
on the occasion of his 60th birthday}

\maketitle

\setcounter{tocdepth}{1}


\section{Introduction}

\subsection{Motivation} This paper draws motivation from  a conjecture proposed by Brunella concerning the structure of codimension one foliations on projective spaces.

\begin{conjecture}\label{Conj:Brunella}
Every codimension one foliation on $\mathbb P^n$, $n \ge 3$, either admits an invariant algebraic hypersurface or is everywhere tangent to a foliation by codimension two algebraic subvarieties.
\end{conjecture}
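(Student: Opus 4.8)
The guiding principle behind my plan is that the dichotomy in the conjecture should be matched to the dichotomy between \emph{non-dicritical} and \emph{dicritical} foliations: I expect the first alternative to be forced precisely when $\F$ is non-dicritical, and the second to be forced precisely when $\F$ is dicritical. The plan is therefore to fix a codimension one foliation $\F$ on $\Pj^n$, $n \ge 3$, and to argue separately in these two cases, which together exhaust all possibilities.

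In the non-dicritical case the plan is to produce an invariant algebraic hypersurface, realizing the first alternative. First I would resolve the singularities of $\F$ by a sequence of blow-ups along smooth centers until the transformed foliation is reduced; non-dicriticality guarantees that every exceptional component is invariant. Working along the singular locus, one then constructs a formal invariant hypersurface and checks its convergence, in the spirit of the existence-of-separatrix results of Cano and Cerveau. The remaining, decisive step is to show that this analytic hypersurface germ extends to a genuinely algebraic hypersurface of $\Pj^n$: this is exactly the strengthened Rossi-type continuation theorem announced in the abstract, and it is here that the hypothesis $n \ge 4$ enters, since the formal-principle/extension argument needs the ambient dimension to be large relative to the codimension of the subvariety being continued. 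This chain of steps is the content of the paper's main theorem and disposes of the non-dicritical case for $n \ge 4$; the value $n=3$ sits at the boundary where the continuation argument degenerates and would have to be handled by separate, more delicate means.

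The dicritical case is the hard half, and I expect it to be the principal obstacle. After resolution, $\F$ admits an exceptional component $E$ to which the transformed foliation is generically transverse; transversality forces the leaves of $\F$ to meet $E$ along subvarieties of dimension $n-2$, so that locally one sees a positive-dimensional family of separatrices of $\F$. The program is to show that these local families glue into a single globally defined, algebraically integrable codimension two foliation $\G$ with $T\G \subset T\F$---equivalently, that $\F$ is the pullback $\psi^{*}\mc H$ of a codimension one foliation $\mc H$ on a surface $S$ by a dominant rational map $\psi \colon \Pj^n \dashrightarrow S$ whose general fibres are the codimension two leaves of $\G$; this is exactly the second alternative (with the first alternative reappearing precisely when $\mc H$ happens to carry an invariant algebraic curve). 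The crux is the globalization: one must control the monodromy of the local pencils of separatrices along $E$ and, above all, prove that the leaves of the candidate foliation $\G$ are algebraic rather than merely analytic. A natural attack is to feed the very same continuation machinery used in the non-dicritical case back in, now applied to a germ of the $(n-2)$-dimensional family of leaves: algebraizing one such leaf should, by a Jouanolou-type algebraic-integrability argument, force all of them to be algebraic and produce the rational map $\psi$. Verifying the hypotheses of the extension theorem in this codimension two setting, and excluding transcendental behaviour of the dicritical leaves, is the point at which the full conjecture resists proof.
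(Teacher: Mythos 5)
You set out to prove Conjecture \ref{Conj:Brunella}, but note first that the paper does not prove this statement: it is an open conjecture, and the paper establishes only the special case of non-dicritical foliations on $\mathbb P^n$, $n\ge 4$ (Theorem \ref{THM:Algebraic}). Your non-dicritical half reconstructs that partial result in outline --- Cano--Cerveau-style separatrices plus a strengthened Rossi continuation theorem --- but with a substantive inaccuracy: the paper never performs a global reduction of singularities of $\F$ on $\mathbb P^n$, which is in any case unavailable in full generality for $n>3$ (Cano--Mattei prove it only generically). Instead it works semi-locally: it fixes a codimension two irreducible component $S$ of $\sing(\F)$, produces via Camacho--Sad on a general plane section a germ of curve tangent to $\F$ at a general point of $S$, invokes the semi-local separatrix theorem (Theorem \ref{T:semilocal}) to obtain an $\F$-invariant local hypersurface $V$ in a Euclidean neighborhood $U$ of $S$, and then applies Theorem \ref{THM:Rossi} with $X=S$. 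The hypothesis $n\ge 4$ enters exactly through the inequality $\dim V+\dim S=(n-1)+(n-2)>n$, not through a vague formal-principle constraint as you suggest; for $n=3$ this inequality fails, which is why the paper offers only conditional statements there, via the variant Theorem \ref{T:Rossibis}.

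The genuine gap is your dicritical half, and with it the conjecture itself: what you describe is a program, not a proof, as you yourself concede, so the proposal does not establish the statement. Moreover the program breaks at a concrete step. You claim that after resolution, transversality of the leaves to a non-invariant exceptional divisor $E$ yields ``locally a positive-dimensional family of separatrices of $\F$'' which should glue into an algebraically integrable codimension two foliation. Jouanolou's examples refute this mechanism: the foliation defined by $\omega_m$ on $(\mathbb C^3,0)$ is dicritical, the blown-up foliation is transverse to the exceptional $\mathbb P^2$, yet there is \emph{no} separatrix at the origin at all --- the traces of the local leaves on $E$ are leaves of the Jouanolou foliation on $\mathbb P^2$, which are transcendental, and their blow-downs are not analytic germs at $0$. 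The corresponding global foliation on $\mathbb P^3$ does satisfy the second alternative, but only because it is a linear pullback from $\mathbb P^2$, tangent to the pencil of lines through the center of projection; that pencil is invisible to your construction, whose candidate leaves (the traces on $E$) are the wrong, non-algebraic objects. Your fallback of feeding the continuation machinery back in also cannot be run: to algebraize a local leaf $V$ of the candidate foliation, with $\dim V=n-2$, Theorem \ref{THM:Rossi} needs an algebraic subvariety $X$ meeting $V$ with $\dim X+(n-2)>n$, i.e.\ $\dim X\ge 3$, and a general such leaf passes through no distinguished algebraic subvariety of that dimension, so the hypotheses cannot be verified. Excluding transcendental dicritical behaviour is precisely where the conjecture remains open.
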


We focus our attention on the class of codimension one foliations with non-dicritical singularities. Roughly speaking, these are foliations for which composition of blow-ups with centers contained in the singular set of the foliation will have invariant exceptional divisors, see \S \ref{S:dicritical}. It can be verified that non-dicritical foliations on $\mathbb P^n$ cannot be tangent to a one-dimensional foliation by algebraic leaves. Therefore, a positive  answer to Conjecture \ref{Conj:Brunella}, would imply that non-dicritical foliations on $\mathbb P^n$, $n \ge 3$, have at least one  algebraic leaf.

\subsection{Existence of algebraic separatrices} Our first main result confirms Conjecture
\ref{Conj:Brunella} for the class of non-dicritical foliations on  $\mathbb P^n$, $n \ge 4$.

\begin{THM}\label{THM:Algebraic}
Let $\F$ be a codimension one foliation on $\mathbb P^n$, $n \ge 4$. If $\F$
is non-dicritical then $\F$ leaves invariant an algebraic hypersurface.
\end{THM}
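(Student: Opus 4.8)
The plan is to pass from local to global: first produce a germ of $\F$-invariant analytic hypersurface along a positive-dimensional algebraic subvariety of $\mathbb{P}^n$, and then extend and algebraize it by means of the strengthened form of Rossi's continuation theorem announced in the abstract. The hypothesis $n\ge 4$ will enter precisely to guarantee that the subvariety carrying the germ has dimension at least two, which is exactly what the continuation/algebraization step requires.

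For the local construction I would look at the singular set $\sing(\F)$, which has codimension at least two. After possibly performing a sequence of blow-ups with centers in the singular locus -- which, by non-dicriticality, produce invariant exceptional divisors and hence do not destroy the problem -- I may assume that there is an irreducible component $Z\subset\sing(\F)$ of codimension exactly two, so that $\dim Z=n-2\ge 2$. At a generic point $z\in Z$ the foliation has a local product structure $\F\simeq \G\times(Z,z)$, where $\G$ is a germ of singular foliation on a smooth transverse surface. Because $\F$ is non-dicritical, $\G$ is a non-dicritical germ of surface foliation; the Camacho--Sad separatrix theorem then furnishes an invariant curve for $\G$, and non-dicriticality guarantees that there are only finitely many such separatrices. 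Spreading these transverse separatrices along $Z$ and controlling their monodromy around $Z$, they organize into a well-defined germ of $\F$-invariant analytic hypersurface $S$ defined on a connected neighborhood of a Zariski-open subset of $Z$.

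With $S$ in hand, I would invoke the strengthened Rossi theorem: a germ of analytic hypersurface defined along a connected algebraic subvariety $Z\subset\mathbb{P}^n$ of dimension at least two extends to a global analytic subvariety of $\mathbb{P}^n$, which is algebraic by GAGA. Let $\overline S$ denote an irreducible component of the resulting algebraic hypersurface. Since $\overline S$ is $\F$-invariant on the neighborhood of $Z$ where it agrees with $S$, and invariance of an irreducible hypersurface is detected by the vanishing of the algebraic tangency between $\F$ and $\overline S$ on a nonempty open set, $\overline S$ must be invariant everywhere by analytic continuation. This produces the desired algebraic separatrix.

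The main obstacle is twofold. The technical heart is the extension-and-algebraization step, that is, the strengthening of Rossi's continuation theorem to hypersurface germs along subvarieties of dimension $\ge 2$: extending codimension-one data is considerably more delicate than the lower-codimensional situations covered by the classical statements, and this is exactly where the threshold $n\ge 4$ is forced. The second difficulty, logically prior, is to secure a positive-dimensional center carrying the separatrix germ: when $\sing(\F)$ has no component of codimension two one must create one, and I expect the non-dicritical reduction of singularities -- whose exceptional divisors are automatically invariant -- to be the tool that reduces the general case to the situation treated above.
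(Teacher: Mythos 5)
Your overall strategy coincides with the paper's: pick a codimension-two component $S$ of $\sing(\F)$, use Camacho--Sad on a two-dimensional transverse section to produce invariant local data, propagate it to an $\F$-invariant local hypersurface along $S$, and algebraize via the strengthened Rossi theorem, with $n\ge 4$ entering exactly as you say, since a hypersurface germ $V$ needs $\dim V+\dim S=(n-1)+(n-2)>n$. The genuine gap is in the propagation step, which is the real technical content and which your sketch both understates and concludes too weakly. ``Spreading the separatrices along $Z$ and controlling their monodromy'' is precisely the semi-local separatrix theorem of Cano--Cerveau and Cano--Mattei (Theorem \ref{T:semilocal}), and what that theorem delivers --- and what is actually needed --- is an invariant hypersurface in a Euclidean neighborhood of the \emph{entire compact component} $S$, not merely of a Zariski-open subset of it. This is not a cosmetic difference: Theorem \ref{T:Rossi} requires $V$ to live in a neighborhood $U$ of a compact subvariety $X$ with $\dim X\ge 2$ (compactness of $X$ inside $U$ is used in its proof via the distance to $\partial U$), and a surface inside $S$ cannot in general be chosen to avoid a codimension-one bad locus of $S$, so a hypersurface defined only near a Zariski-open subset of $S$ does not feed into the algebraization theorem. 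Moreover, the hard part is exactly the extension across the non-generic points of $S$ --- where the local product structure fails, where $S$ meets other components, where the dimensional type jumps --- and this is supplied by the cited semi-local results, not by a monodromy argument at generic points alone.

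Two smaller points. The blow-up detour to ``create'' a codimension-two component is both unnecessary and harmful: by Jouanolou, the singular set of any codimension one foliation on $\mathbb P^n$, $n\ge 2$, already contains a codimension-two irreducible component; and after blowing up, the ambient space is no longer $\mathbb P^n$, so the Rossi-type continuation (whose proof uses the automorphism group of $\mathbb P^n$) would no longer apply. Finally, the paper's version of Rossi's theorem produces the algebraic extension directly from the Hilbert scheme, so no appeal to GAGA is needed, though your use of Chow's theorem at that point is harmless; your argument for the invariance of the resulting hypersurface via the analytic tangency locus is correct.
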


Unfortunately, our proof of Theorem \ref{THM:Algebraic} does not generalize easily to dimension three.
Nevertheless, our arguments still guarantee the existence of algebraic separatrices in $\mathbb P^3$
under some  conditions on the codimension two components of the singular set of the foliation as explained in Section \ref{S:dim3}.

\subsection{Characterization of non-dicritical logarithmic foliations}
In \cite{MR2324555}, Cerveau, Lins Neto et al.,  proposed a stronger version  of Conjecture \ref{Conj:Brunella}.

\begin{conjecture}\label{Conj:Crocodile}
Every codimension one foliation on a compact complex manifold either is transversely projective  or is everywhere tangent to a foliation by codimension two compact subvarieties.
\end{conjecture}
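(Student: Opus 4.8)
The plan is to prove the dichotomy in contrapositive form: assuming that the codimension one foliation $\F$ is \emph{not} everywhere tangent to a foliation by codimension two compact subvarieties, I would try to equip $\F$ with a (singular) transverse projective structure. The two alternatives are very different in nature---the second is a strong algebraicity/rigidity condition, while the first is the expected ``generic'' behaviour---so the natural route is to suppose the rigid alternative fails and then manufacture the projective structure directly.

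First I would replace $\F$ by a reduced model, applying reduction of singularities for codimension one foliations (the circle of ideas to which Felipe Cano has contributed so much) so that the singular locus is as simple as possible and every dicritical phenomenon is exposed on the exceptional divisor. Granting the failure of the second alternative, the codimension two components of $\sing(\F)$, together with the leaves accumulating on them, cannot be organized into an invariant foliation by compact codimension two subvarieties; the goal is to convert this abundance of transverse directions into the analytic data of a projective structure. Concretely, I would seek a triple of meromorphic $1$-forms $\omega_0,\omega_1,\omega_2$, with $\omega_0$ defining $\F$, assembling into a flat $\mathfrak{sl}_2(\C)$-connection, i.e. satisfying
\[ d\omega_0 = \omega_0\wedge\omega_1,\qquad d\omega_1 = \omega_0\wedge\omega_2,\qquad d\omega_2 = \omega_1\wedge\omega_2 . \]
The obstruction to globalizing such a triple lives in a hypercohomology group attached to $\F$, and one would want to show that this obstruction is killed precisely when the second alternative is excluded.

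The decisive difficulty---and the reason this remains a conjecture rather than a theorem---is exactly this globalization step on an \emph{arbitrary} compact complex manifold. The proof of Theorem \ref{THM:Algebraic} relies crucially on the ampleness of $\mathcal{O}(1)$ on $\mathbb P^n$ and on Rossi-type continuation/algebraization, which upgrade local separatrices and local transverse structures to global algebraic objects once $n\ge 4$. On a general compact manifold there is neither a distinguished positive line bundle nor any Hartogs-type extension principle, so nothing forces the local projective charts produced near the singularities to fit together globally. A complete proof would therefore need a substitute for these positivity and continuation inputs---plausibly drawn from the transverse dynamics of $\F$ (its holonomy pseudogroup and minimal sets) rather than from the geometry of the ambient manifold. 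I expect this to be the main obstacle; indeed, even the projective case $\mathbb P^3$, where Theorem \ref{THM:Algebraic} already fails to apply, is still open, which is a good measure of how far the general statement lies beyond the methods developed here.
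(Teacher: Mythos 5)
You have correctly diagnosed the situation: the statement labelled \texttt{Conj:Crocodile} is an open conjecture (proposed in \cite{MR2324555}), and the paper offers no proof of it, so there is no ``paper's own proof'' to compare your attempt against. Your proposal honestly declines to claim a proof, and the strategy you sketch --- seeking a triple $\omega_0,\omega_1,\omega_2$ satisfying the $\mathfrak{sl}_2$-structure equations, i.e.\ a finite Godbillon--Vey sequence --- is in fact precisely the framework in which the conjecture was originally formulated (the cited reference is the paper on Godbillon--Vey sequences), so your instincts align with the literature. You also correctly identify why the general compact case is out of reach of the methods developed here: the continuation/algebraization input (Theorem \ref{THM:Rossi}) and the positivity of $\mathbb P^n$ have no analogue on an arbitrary compact complex manifold.

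For completeness: what the paper actually proves toward this conjecture is Theorem \ref{THM:Logarithmic}, a special case (non-dicritical foliations on $\mathbb P^n$, $n\ge 4$, whose general plane section has no saddle-nodes in its reduction). The route there is quite different from, and more concrete than, the obstruction-theoretic globalization you outline: one first produces algebraic separatrices through every codimension two component of $\sing(\F)$ (Theorem \ref{THM:Algebraic}), restricts to a general $\mathbb P^2$, and uses Brunella's formula $N_{\tilde\G}\cdot C_i = C_i^2 + Z(\tilde\G,C_i)$ together with the non-degeneracy of the intersection form to identify $N_{\tilde\G}$ with $\mathcal O(\sum C_j)$; this exhibits the foliation as given by a logarithmic $1$-form, which is closed by Deligne, hence in particular transversely projective. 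So the conclusion reached in the tractable case is strictly stronger (closed logarithmic) than the transverse projectivity demanded by the conjecture, but it is obtained only by leaning on exactly the projective-space tools you point out are unavailable in general. No gap to report beyond the one you already name: the statement remains a conjecture.
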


Our second main result confirms Conjecture \ref{Conj:Crocodile} for  non-dicritical foliations on  $\mathbb P^n$, $n \ge 4$, with general two dimensional section free from  saddle-nodes.

\begin{THM}\label{THM:Logarithmic}
Let $\F$ be a codimension one foliation on $\mathbb P^n$, $n \ge 4$.
If $\F$ is non-dicritical and the restriction of $\F$ to a general $\mathbb P^2$
does not have saddle nodes in its resolution of singularities then $\F$ is defined
by a closed logarithmic $1$-form.
\end{THM}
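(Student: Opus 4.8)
The plan is to upgrade the invariant algebraic hypersurface provided by Theorem~\ref{THM:Algebraic} into an explicit closed logarithmic defining form, by controlling the transverse type of $\F$ along the codimension-two components of its singular set. First I would fix a reduced equation $F=\prod_i F_i$ for the full invariant divisor $D=\bigcup_i D_i$, where the $D_i$ are the irreducible invariant hypersurfaces. The combination of Theorem~\ref{THM:Algebraic}, the non-dicritical hypothesis, and the extension results underlying them should guarantee that $D$ is algebraic and absorbs every local separatrix, so that it is legitimate to look for a logarithmic form with polar set supported on $D$. Along a general codimension-two component $Z\subset\sing(\F)$ the foliation has a well-defined transverse type, and this germ is exactly what a general $\mathbb P^2$ cuts out; hence the hypothesis that the restriction to a general $\mathbb P^2$ has no saddle-nodes in its resolution is precisely the statement that all these transverse types reduce to simple, non-degenerate singularities.

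Next I would exploit the absence of saddle-nodes to linearize the transverse structure. A reduced non-degenerate singularity of a surface foliation is transversely affine and, in its linearizable cases, is defined by a closed logarithmic form $\lambda_1\frac{du}{u}+\lambda_2\frac{dv}{v}$; the saddle-nodes are exactly the reduced singularities for which this fails, so discarding them yields a logarithmic transverse model along each $D_i$. From these models the transverse residue of $\F$ along $D_i$ is a holomorphic function on $D_i$, constant along the generic transverse direction because the transverse type is rigid along the connected component $Z$; since a general $D_i$ is connected for $n\ge 4$, this residue is a global constant $\lambda_i\in\mathbb C$. With the $\lambda_i$ in hand I would set
\[
  \omega_{\log}=\sum_i \lambda_i\,\frac{dF_i}{F_i},
\]
which is automatically closed, each $\frac{dF_i}{F_i}$ being closed, and I would verify that it defines $\F$: the cleared form $F\,\omega_{\log}$ and the original defining form are holomorphic $1$-forms with the same invariant divisor and the same residues along $D$, and a non-dicritical foliation all of whose separatrices lie in $D$ is pinned down by this data, so the two forms must be proportional.

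Finally I would arrange the descent to $\mathbb P^n$. For $\omega_{\log}$ to be a genuine form on $\mathbb P^n$ its radial contraction $i_R\,\omega_{\log}=\sum_i\lambda_i\deg(F_i)$ must vanish, and I expect the relation $\sum_i\lambda_i\deg(F_i)=0$ to follow from a residue/index identity of Camacho--Sad type on a general $\mathbb P^2$. The passage from the general plane section back to all of $\mathbb P^n$, for $n\ge 4$, is then handled by the same Lefschetz- and Rossi-type extension philosophy that drives Theorem~\ref{THM:Algebraic}, the residues being constant and hence independent of the chosen section. The step I expect to be the main obstacle is precisely this globalization: the no-saddle-node hypothesis removes the local irregular singularities that obstruct a logarithmic structure and thereby equips $\F$ with a transversely projective, indeed transversely affine, structure on $\mathbb P^n\setminus D$, but collapsing it to a genuine \emph{closed} logarithmic form requires trivializing the transverse monodromy, which is a priori only projective. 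I would control this monodromy through the topology of $\mathbb P^n\setminus D$---using the simple connectivity of complements of hypersurfaces in high-dimensional projective space---to force the transverse structure down to the Euclidean, constant-residue case.
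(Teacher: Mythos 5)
Your strategy---directly writing down $\omega_{\log}=\sum_i\lambda_i\,dF_i/F_i$ and arguing it must define $\F$---is genuinely different from the paper's proof, but it has two gaps that I do not see how to close. First, the step ``the two forms must be proportional'' because they have the same polar divisor and the same residues is precisely the content of the theorem, not a lemma you can invoke: a non-dicritical foliation whose separatrices all lie in $D$ can perfectly well be transversely affine or projective without being defined by any closed logarithmic form (Riccati-type examples), so matching the divisor and residues of a candidate form does not pin the foliation down. Second, your proposed mechanism for trivializing the transverse monodromy is based on a false premise: complements of hypersurfaces in $\mathbb P^n$ are \emph{not} simply connected (already $H_1(\mathbb P^n\setminus D)$ is a nontrivial finitely generated abelian group determined by the degrees of the components of $D$, and by Lefschetz $\pi_1(\mathbb P^n\setminus D)\cong\pi_1(\Sigma\setminus D\cap\Sigma)$ for a general plane $\Sigma$, which is never trivial for $D\neq\emptyset$). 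So the ``globalization'' step you yourself identify as the main obstacle remains entirely open in your argument; the local logarithmic transverse models along the $D_i$ do not glue for free.

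The paper avoids both problems by working on a resolved general plane section $\tilde\Sigma\to\Sigma$. The no-saddle-node hypothesis is used not to linearize transverse types but to guarantee that every singularity of the reduced foliation $\tilde\G$ lies at the intersection of two convergent separatrices, all of which are shown to be algebraic (via the proof of Theorem~\ref{THM:Algebraic}) and hence belong to an explicit finite collection $\mathscr C=\{C_1,\dots,C_k\}$ of invariant curves. Brunella's index formula $N_{\tilde\G}\cdot C_i=C_i^2+Z(\tilde\G,C_i)$ then yields $N_{\tilde\G}\cdot C_i=(\sum_j C_j)\cdot C_i$ for all $i$; since the $C_i$ generate a finite-index subgroup of $\Pic(\tilde\Sigma)$ and the intersection form is non-degenerate, this forces the line-bundle identity $N_{\tilde\G}\cong\mathcal O_{\tilde\Sigma}(\sum_j C_j)$, which \emph{produces} a logarithmic defining form rather than merely a candidate for one. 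Closedness is then automatic by Deligne's theorem on logarithmic forms with normal crossing poles, and the passage from $\Sigma$ back to $\mathbb P^n$ is a known extension result for foliations defined by closed logarithmic forms on generic plane sections. If you want to salvage your approach, the missing ingredient is exactly a replacement for this cohomological identification of $N_\F$ with $\mathcal O(D)$; the residue bookkeeping alone will not deliver it.
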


It is interesting to compare Theorem \ref{THM:Logarithmic} with the main result of
\cite{MR3249047}. As in the case of Theorem \ref{THM:Algebraic} we can, under more restrictive assumptions on the codimension two  singularities of $\F$, formulate a version of Theorem \ref{THM:Logarithmic} valid for foliations on  $\mathbb P^3$, see  Section \ref{S:dim3}.

\subsection{Algebraization of analytic subvarieties} The main technical tool used in the proofs of Theorems \ref{THM:Algebraic} and \ref{THM:Logarithmic} is a strengthening of a classical result by Rossi \cite{MR0244516} concerning the algebraization  of germs of analytic subvarieties of projective varieties. Although not standard, we will use the terminology local subvarieties for subvarieties of Euclidean open subsets of projective spaces, in order to emphasize that they are not a priori  globally defined.

\begin{THM}\label{THM:Rossi}
Let $X$ be an irreducible subvariety of $\mathbb P^n$. Let $U$ be an Euclidean neighborhood of $X$ and let $V$ be a local irreducible subvariety of $U$. If $\dim V + \dim X > n$ and $X \cap V \neq \emptyset$
then there exists a subvariety $\overline V$ of $\mathbb P^n$ such that $\dim \overline V = \dim V$ and $\overline V \cap U \supseteq V$.
\end{THM}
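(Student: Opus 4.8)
The plan is to reduce the statement to a problem of analytic continuation and then invoke Chow's theorem. Since every closed analytic subset of $\mathbb{P}^n$ is algebraic, it suffices to continue the local subvariety $V$ to a closed analytic subset $\overline V \subseteq \mathbb{P}^n$ of the same dimension with $\overline V \cap U \supseteq V$; algebraicity of $\overline V$ is then automatic. Writing $k = \dim V$ and $d = \dim X$, the hypothesis $k + d > n$ translates into the two numerical facts that drive everything: first, $\codim V = n - k \le d - 1$, so $V$ has small codimension relative to $d$; and second, the local intersection $W := V \cap X$ has dimension at least $k + d - n \ge 1$, so $V$ is anchored to $X$ along a positive-dimensional set rather than at isolated points.

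The engine of the continuation is Rossi's principle: neighborhoods of positive-dimensional subvarieties of $\mathbb{P}^n$ enjoy strong extension properties because the normal bundle $N_{X/\mathbb{P}^n}$ is positive, making such neighborhoods pseudoconcave in the sense of Andreotti. Starting from the positive-dimensional anchor $W$, where $V$ is an honest analytic subvariety, one continues $V$ across the pseudoconcave boundary by the Hartogs--Levi Kontinuit\"atssatz for analytic sets. The function of the \emph{strict} inequality $k + d > n$ is twofold: it keeps $\codim V$ within the range in which purely codimensional analytic sets extend across the relevant concave regions, and it forces the continuation to spread throughout $\mathbb{P}^n$ rather than terminating along some intermediate pseudoconcave wall.

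To upgrade Rossi's classical conclusion to the clean form stated here, I would organize the continuation by induction on $n$ via generic hyperplane sections. Cutting by a general hyperplane $H$ replaces the triple $(X, V, \mathbb{P}^n)$ by $(X \cap H, V \cap H, \mathbb{P}^{n-1})$, lowering all dimensions by one while preserving the anchoring $V \cap X \neq \emptyset$; one then algebraizes the section and reconstructs $\overline V$ from its hyperplane sections. The bookkeeping must be handled with care, since a hyperplane cut lowers the slack $k + d - n - 1$ and therefore can be performed only while the inequality remains strict, so the induction has to consume this slack deliberately and stop at a base case in the range covered directly by Rossi.

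The main obstacle is precisely the globalization of the continuation: one must show that the continued set remains pure of dimension $k$ and irreducible, that it does not degenerate or escape as it is propagated away from $W$, and that its maximal domain of continuation is all of $\mathbb{P}^n$ rather than a proper pseudoconcave subdomain --- this last point being the exact content of the strengthening of Rossi's theorem. Granting this, the resulting $\overline V$ is a closed, irreducible, pure $k$-dimensional analytic subset of $\mathbb{P}^n$ with $\overline V \cap U \supseteq V$, and Chow's theorem makes it the algebraic subvariety required by the statement.
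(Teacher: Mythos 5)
There is a genuine gap, and it sits exactly at the point the theorem is supposed to address. Relative to Rossi's original result, the entire content of the statement is the removal of the hypothesis that every branch of $V\cap X$ has the expected dimension $\dim V+\dim X-n$; your proposal rehearses Rossi's own analytic strategy (pseudoconcavity of a neighborhood of $X$, Hartogs--Levi continuation from the anchor $W=V\cap X$) and then, in the final paragraph, explicitly defers the globalization --- ``Granting this\dots'' --- while acknowledging that this step \emph{is} ``the exact content of the strengthening.'' That is not a proof outline with a missing routine verification; it is the statement of the theorem re-labelled as an obstacle. The auxiliary devices do not repair it: cutting by a general hyperplane lowers $\dim V+\dim X-n$ by one, so in the minimal case $\dim V+\dim X=n+1$ there is no slack to consume and the induction cannot start, and reconstructing $\overline V$ from algebraized hyperplane sections is itself an unproved globalization of the same nature. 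Moreover, $X$ is an arbitrary irreducible subvariety (possibly singular), so positivity of $N_{X/\mathbb P^n}$ and pseudoconcavity of its neighborhoods are not available off the shelf.

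For comparison, the paper's proof avoids continuation across boundaries altogether. It fixes $p\in X\cap V$ and uses that automorphisms $\varphi$ of $\mathbb P^n$ close to the identity keep $\varphi^*X$ inside $U$, so every irreducible component of $\varphi^*X\cap V$ is a \emph{compact} analytic subset of positive dimension $\ge \dim X+\dim V-n$ contained in $V$, hence a projective subvariety. Choosing $\varphi_0$ fixing $p$ so that some component $E\ni p$ has minimal dimension, and then varying $\varphi$ among automorphisms fixing $p$, one sweeps out a full neighborhood of a general point of $E$ inside $V$ by projective subvarieties through $p$ contained in $V$. These subvarieties are parametrized by a Zariski-closed subset $\Sigma(V,p)$ of an irreducible component $\Sigma$ of the Hilbert scheme (subschemes through $p$ whose formal completion at $p$ lies in $V$), and the image of the restricted universal family under the projection to $\mathbb P^n$ is the required $\overline V$. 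No condition on the dimension of $V\cap X$ ever enters. The missing idea in your proposal is precisely such a mechanism for producing an algebraic family of compact subvarieties filling up $V$; without it, the argument does not go beyond what Rossi already proved.
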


Rossi, in his original statement, made the assumption  that $X \cap V$ have the expected dimension $\dim X + \dim V - \dim X \cap V$. Here we make no assumption on the dimension of $X \cap V$. The original proof is analytic in nature, and relies on an ingenious  use of Hartog's extension theorem. Our proof  is more algebraic, and explores properties of the Hilbert scheme of $\mathbb P^n$.

\subsection{Structure of the paper}
Section \ref{S:Cano} reviews the definition of non-dicritical singularities and the results on the existence of separatrices for  codimension one foliations in the local and in the semi-local setting. Section \ref{S:Rossi} is devoted to the proof of Theorem \ref{THM:Rossi}. Finally, Theorems \ref{THM:Algebraic} and \ref{THM:Logarithmic} are proved in Section \ref{S:proofs}.

\subsection{Acknowledgments} J. V. Pereira thanks Dominique Cerveau and Stefan Kebekus for their remarks about this work and acknowledges the support of  CNPq, Faperj,  and Freiburg Institute for Advanced Studies (FRIAS). The research leading to these results has received funding from the People Programme (Marie Curie Actions) of the European Union's Seventh Programme (FP7/2007-2013) under REA grant agreement n\textsuperscript{\underline{\scriptsize o}} 609305.

\section{Existence of separatrices for germs of non-dicritical foliations}\label{S:Cano}
This section briefly reviews the known results concerning the existence of separatrices for codimension one foliations on germs of smooth complex manifolds.

Let $\omega$ be a  germ of integrable differential $1$-form on $(\mathbb C^n,0)$ and consider the foliation $\mathcal F$ defined by it. As usual we will assume that $\omega$ has singular set of codimension at least two. A germ of hypersurface $H$ through $0$ is a separatrix for $\F$ if for every smooth point $p$ of $H$, the tangent space of $H$ at $p$ is contained in the kernel of $\omega(p)$.

\subsection{Camacho--Sad} For foliations on $(\mathbb C^2,0)$ there always exists at least one separatrix for $\F$. This was first established by Camacho and Sad in \cite{MR657239}.

\begin{thm}\label{T:CamachoSad}
Let $\mathcal F$ be a germ of foliation on $(\mathbb C^2,0)$. Then there exists a germ of separatrix through $0$.
\end{thm}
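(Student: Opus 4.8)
The plan is to reduce the singularity by blow-ups and then extract a separatrix from an intersection-theoretic analysis of the exceptional divisor. If $0$ is a regular point of $\F$, the leaf through $0$ is already a smooth separatrix, so I may assume $0 \in \sing(\F)$. By Seidenberg's reduction of singularities there is a finite composition of point blow-ups $\pi \colon (M,E) \to (\C^2,0)$, with $E = \pi^{-1}(0)$ a normal crossings divisor, such that the transformed foliation $\widetilde\F = \pi^* \F$ has only reduced singularities along $E$. Recall that a reduced singularity is either non-degenerate, carrying two transverse smooth convergent separatrices, or a saddle-node, carrying a convergent \emph{strong} separatrix and a (generically formal) \emph{weak} one. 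The point of the reduction is that a convergent invariant curve $S$ of $\widetilde\F$ not contained in $E$ has image $\pi(S)$ a genuine separatrix of $\F$ through $0$; so it suffices to manufacture such an $S$.

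Next I would dispose of the dicritical case: if some component $D \subset E$ is not $\widetilde\F$-invariant, then $\widetilde\F$ is generically transverse to $D$, and the leaf through a generic point of $D$ is a smooth curve meeting $D$ transversally, whose image under $\pi$ is the desired separatrix. So assume every component of $E$ is invariant. Now I analyze what a \emph{failure} to produce $S$ forces locally. At a non-degenerate singularity lying on a single component $E_i$ (not a corner), one separatrix is $E_i$ and the other is convergent and transverse to $E$, giving $S$; likewise a saddle-node whose strong separatrix is transverse to $E$ gives $S$. Hence, if $\F$ has no separatrix, every singular point of $\widetilde\F$ on $E$ must be either a corner $E_i \cap E_j$ (both separatrices lying along $E$) or a saddle-node sitting on a single $E_i$ whose strong separatrix equals $E_i$ (its transverse weak separatrix being merely formal).

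The contradiction comes from the Camacho--Sad index formula applied to each invariant component, $E_i \cdot E_i = \sum_{p \in \sing(\widetilde\F) \cap E_i} \mathrm{CS}(\widetilde\F, E_i, p)$, together with negative definiteness of the intersection matrix $(E_i \cdot E_j)$ — valid because $E$ is the exceptional divisor of a modification of a smooth point, whence also each $E_i \cdot E_i \le -1$. In the bad configuration the indices are highly constrained: at a non-corner saddle-node the index along $E_i$ is the strong-separatrix index, which is $0$; at a corner the two indices along the crossing components are mutually inverse (product $1$) in the non-degenerate case, or $\{0,\mu\}$ in the saddle-node case. Thus $E_i \cdot E_i = \sum_{j \sim i} t_{ij}$ is a sum of corner indices subject to these reciprocity relations. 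A leaf vertex $E_{i_0}$ of the dual tree then has $t_{i_0 j} = E_{i_0}\cdot E_{i_0} \le -1$, forcing the reciprocal partner $t_{j i_0} \in (-1,0)$, and feeding this into the index identity at $E_j$ exhibits another negative corner index; running this descent along the finite tree leads to a contradiction, so a transverse convergent separatrix must exist after all.

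The main obstacle is precisely this last combinatorial descent, where the interplay between the reciprocity of corner indices, the negativity $E_i\cdot E_i \le -1$, and the \emph{vanishing} of the strong-separatrix index at saddle-nodes must be organized so that the propagation cannot terminate consistently on a tree. The delicate bookkeeping is entirely on the saddle-node vertices, where an index along a component can collapse to $0$ and break the naive reciprocal orientation of edges; making the extremal/descent argument robust against these degenerate contributions, rather than the local separatrix classification (which is immediate), is where the real work lies.
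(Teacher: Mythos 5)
The paper does not prove this statement: Theorem~\ref{T:CamachoSad} is the classical separatrix theorem of Camacho and Sad, quoted from \cite{MR657239} with pointers to the alternative proofs \cite{MR1389507, MR1742334, MR1754036, MR2747734, phdEdileno}. So the only meaningful comparison is with those classical arguments, and your outline does follow their architecture: Seidenberg reduction, disposal of the dicritical case, the index formula $E_i\cdot E_i=\sum_p \mathrm{CS}(\widetilde\F,E_i,p)$, negative definiteness of the intersection matrix, and a combinatorial argument on the dual tree. The local classification of reduced singularities and the reduction to the ``bad configuration'' (all singularities are corners or saddle-nodes with strong separatrix inside $E$) are correct and are indeed the easy part.

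The gap is that the final descent --- which you yourself identify as ``where the real work lies'' --- is not carried out, and the version you sketch is not correct as stated. The Camacho--Sad indices are \emph{complex} numbers; only their sum over each component is an integer $\le -1$. Hence the step ``$t_{i_0j}=E_{i_0}\cdot E_{i_0}\le -1$ forces the reciprocal partner $t_{ji_0}\in(-1,0)$'' has no meaning in general (and even for real values the reciprocal of a number $\le -1$ lies in $[-1,0)$, with the boundary case $-1$ breaking a strict descent). Moreover the reciprocity $t_{ij}\,t_{ji}=1$ fails precisely at saddle-node corners, where the pair of indices is $\{0,\mu\}$, so the ``orientation of edges'' you propagate can be destroyed at every such vertex. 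Organizing the induction so that it survives these degenerations is the entire content of the theorem: Camacho--Sad's original proof and J.~Cano's and Toma's simplifications do this by singling out a carefully defined class of suitable singular points (e.g.\ those whose index avoids $\mathbb{Q}_{\ge 0}$) and proving, using negative definiteness, that some component must carry a suitable non-corner singularity, which then yields the convergent transverse separatrix. Without that bookkeeping the argument is an accurate road map but not a proof.
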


The result was later generalized by Camacho to singular surfaces with contractible resolution graph in \cite{MR963011}. Since then,  a number of alternative proofs came to light, see for instance \cite{MR1389507, MR1742334, MR1754036, MR2747734, phdEdileno}.

\subsection{Dicritical foliations}\label{S:dicritical}
 Theorem \ref{T:CamachoSad} does not generalize to codimension one foliations on higher dimensional manifolds without further hypothesis. Even before the appearance of \cite{MR657239}, there was available in the literature an example of a  foliation on $\mathbb C^3$ (global and homogeneous) without any germ of separatrix at $0$.  For any $m\ge 2$, the foliation $\F_m$ defined by the
$1$-form
\[
   \omega_m = (x^{m}z - y^{m+1})dx + (y^{m}x - z^{m+1}) dy + (z^{m}y - x^{m+1})dz
\]
does not have a separatrix at the origin, see \cite[Chapter 4]{MR537038}.

\begin{dfn}
A foliation $\F$ on $(\mathbb C^n,0)$ is dicritical if there exists a finite sequence of blow ups
\[
X_0  = (\mathbb C^n,0) \leftarrow X_1 \leftarrow \cdots \leftarrow X_N
\]
with smooth centers everywhere tangent to the strict transform $\F_i$ of the foliation $\F_0 = \F$, such the exceptional divisor of the last morphism is not invariant by $\F_N$.
\end{dfn}

The strict transforms of Jouanolou's foliations $\F_m$ under the  blow-up at the origin of $\mathbb (\mathbb C^3,0)$  give rise to  foliations which do not leave the exceptional divisor invariant. All Jouanolou's foliations are dicritical foliations.

The definition above appears in \cite[Section 2.1]{MR1162557} as the first of five equivalent definitions for dicritical foliations. The last definition (loc. cit.) states that a foliation $\F$ is dicritical if and only if there exists an irreducible surface $Z \subset (\mathbb C^n,0)$ such that restriction of $\F$ to $Z$
contains infinitely many distinct separatrices. For more information about dicritical foliations the reader can consult \cite{MR1010778, MR980953}.

\subsection{Cano--Cerveau and Cano--Mattei} It turns out that dicriticalness is the only obstruction for the existence of separatrices of codimension one foliations on smooth manifolds.

\begin{thm}\label{T:Cano}
Let $\F$ be a germ of codimension one foliation on $(\mathbb C^n,0)$, $n\ge 3$. If $\F$ is non-dicritical then there exists a germ of  invariant hypersurface through $0$.
\end{thm}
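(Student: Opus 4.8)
The plan is to follow the two-step strategy underlying the work of Cano--Cerveau and Cano--Mattei: first reduce the singularities of $\F$ by blowing up, and then extract a separatrix from the desingularized model by an index argument that ultimately lives in dimension two. First I would invoke reduction of singularities for codimension one foliations — Seidenberg's theorem being the base case in dimension two, and its higher-dimensional analogue the substantial input in general. Thus there is a finite composition of blow-ups $\pi \colon (M,E) \to (\C^n,0)$, with centers contained in the successive singular sets, after which the strict transform $\tilde\F$ has only simple (elementary) singularities and the exceptional divisor $E = \pi^{-1}(0)$ has normal crossings. Because $\F$ is non-dicritical, every irreducible component of $E$ is invariant by $\tilde\F$; this is precisely where the hypothesis enters, and it turns $E$ into a \emph{global} invariant normal-crossings hypersurface on $M$.

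Next I would reformulate the goal upstairs. A germ of separatrix of $\F$ through $0$ is the same thing as an irreducible invariant hypersurface $S$ of $\tilde\F$ that meets $E$ but is \emph{not} contained in it: such an $S$ is not contracted by $\pi$, and since it accumulates on $E$ with $\pi(E)=\{0\}$, its image $\pi(S)$ is a germ of invariant hypersurface through $0$. It therefore suffices to produce one invariant branch of $\tilde\F$ that is ``non-exceptional''. At every simple point the invariant hypersurface-germs are completely understood (locally they form a subset of coordinate hyperplanes), so the only remaining issue is global: do the local non-exceptional branches propagate consistently along $E$, or can every separatrix be absorbed into $E$?

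Finally I would rule out the latter by a Camacho--Sad type argument, which is where Theorem~\ref{T:CamachoSad} and its index formula come into play. Restricting to a generic two-dimensional section, or working along a suitable invariant curve sitting inside $E$, one obtains a compact invariant curve that is a tree of rational curves carrying negative self-intersections. If no non-exceptional separatrix existed, all the local separatrices at the simple points would lie in $E$, forcing the Camacho--Sad indices to be compatible with those negative self-intersections; this contradicts the index identity $\sum_p \mathrm{CS}(\tilde\F, C, p) = C \cdot C$. Hence a non-exceptional local separatrix must appear at some simple point, and one then propagates it to a genuine invariant hypersurface whose image is the sought separatrix.

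I expect the two hard parts to be as follows. First, the reduction of singularities in dimension $\ge 3$ is a deep theorem in its own right — there is no elementary Seidenberg-style statement beyond surfaces, and the bookkeeping of simple models is delicate. Second, and more essential, is the combinatorial control of the Camacho--Sad indices over the \emph{entire} dual complex of $E$: one must handle all the two-dimensional sections simultaneously and show that non-dicriticalness is exactly what prevents the indices from closing up consistently inside $E$. This global index bookkeeping, rather than any single local computation, is the genuine crux of the argument.
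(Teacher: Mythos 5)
This theorem is not proved in the paper: it is quoted from the literature, with the paper attributing the case $n=3$ to Cano--Cerveau \cite{MR1179013} and the case $n>3$ to Cano--Mattei \cite{MR1162557}, and merely noting that both proofs rest on reduction of singularities for non-dicritical foliations. Your outline is a fair description of the $n=3$ strategy: desingularize, use non-dicriticalness to make the exceptional divisor invariant, and then produce a ``trace-type'' (non-corner) singular component via a Camacho--Sad index argument on a two-dimensional section; this matches the structure of \cite{MR1179013}, where separatrices correspond bijectively to connected components of $\sing(\tilde\F)$ not contained in $\sing(E)$.

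There is, however, a genuine gap for $n\ge 4$. Your plan presupposes a full reduction of singularities of codimension one foliations in arbitrary dimension (``its higher-dimensional analogue the substantial input in general''), followed by ``global index bookkeeping over the entire dual complex of $E$.'' No such full resolution theorem is available beyond dimension three: as the paper itself points out, reduction of singularities for non-dicritical foliations is established in full generality only in dimension three, and only \emph{generically} (along general points of the singular locus) in higher dimension. Cano--Mattei's actual argument therefore does not resolve globally; it restricts $\F$ to a sufficiently general three-dimensional section through the origin, applies the three-dimensional existence theorem there, and then extends the resulting separatrix to a germ of invariant hypersurface of $\F$ using only the generic desingularization (this is exactly the mechanism the paper isolates as Theorem~\ref{T:semilocal}). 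So for $n\ge 4$ the crux is the extension step from a generic $3$-section, not the global combinatorics of $E$ that you propose. Finally, note that the two steps you defer as ``expected hard parts'' are precisely the content of the cited papers, so the proposal is a strategy outline rather than a proof.
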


Theorem \ref{T:Cano}  is due to Cano and Cerveau \cite{MR1179013} when $n=3$ and to Cano and Mattei \cite{MR1162557} when $n>3$. Both proofs rely on reduction of singularities for non-dicritical foliations
established in full generality in dimension three \cite{MR1179013} and generically in dimension greather than three \cite{MR1162557}.

\subsection{Semi-local separatrices}
In dimension three, Cano and Cerveau prove a reduction of singularities theorem for non-dicritical foliations and then establish a bijection between connected components of the set of singular points of the resulting foliation which are not contained in the singular set of the exceptional divisor,  and separatrices (formal or convergent) for the original foliation on $(\mathbb C^3,0)$, see \cite[Theorem 2.1]{MR1179013}. In particular they show that any germ of curve in $(\mathbb C^3,0)$ tangent everywhere tangent to a foliation
but not contained in the singular set, is contained in a unique separatrix.  The existence of such germs of curves follows  from Theorem \ref{T:CamachoSad} applied to a sufficiently general hyperplane sections.

In dimension strictly greater than three, Cano and Mattei show in the proof of  \cite[Theorem 5 ]{MR1162557} that the separatrices of the restriction of $\mathcal F$ to a sufficiently general $3$-dimensional germ of manifold through the origin of $(\mathbb C^n,0)$ can be uniquely extended to germs of separatrices of the foliation on $(\mathbb C^n,0)$.

Putting together  \cite[Theorem 2.1]{MR1179013} and the proof of Theorem \cite[Theorem 5]{MR1162557}, one obtains the following semi-local version of Theorem \ref{T:Cano}.

\begin{thm}\label{T:semilocal}
Let $\F$ be a codimension one foliation on a complex manifold $X$. Let $S \subset \mathrm{sing}(\F)$ be an irreducible component of the singular set of $\mathcal F$; let $p \in S$ be a sufficiently general point of $S$; and let $\gamma$ be a germ of irreducible curve at $p$ not contained in $\sing(\F)$ but everywhere tangent to $\F$. If $\mathcal F$ is non-dicritical along $S$ then there exists an open Euclidean neighborhood $U$ of $S$ and a local $\F$-invariant hypersurface $V\subset U$ containing both $\gamma$ and  $S$.
\end{thm}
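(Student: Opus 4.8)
The plan is to solve the problem first as a purely local statement at the general point $p$, invoking the theorems of Cano--Cerveau and Cano--Mattei recalled above, and then to propagate the resulting germ of invariant hypersurface along the entire component $S$ using the non-dicriticalness hypothesis together with an extension theorem for analytic sets.

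First I would dispose of the local statement at $p$. Since $p$ is a sufficiently general point of $S$, we may assume that $S$ has codimension two and that, in suitable local coordinates $(x,y,z)$ with $z=(z_1,\dots ,z_{n-2})$ and $S=\{x=y=0\}$, the foliation $\F$ is equisingular along $S$: the restrictions of $\F$ to the transverse $2$-planes $\{z=\mathrm{const}\}$ are germs of plane foliations with an isolated singularity, forming a holomorphic family over $S$. By the results leading to Theorem~\ref{T:Cano} --- Cano--Cerveau in dimension three and its extension to higher dimension by Cano--Mattei via restriction to a general $3$-dimensional germ --- the curve $\gamma$ is contained in a \emph{unique} germ of $\F$-invariant hypersurface $V_p$ at $p$. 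Because every germ of invariant hypersurface at $p$ meets each transverse plane in an invariant curve germ through the singular point, $V_p$ automatically contains the germ of $S$ at $p$.

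Next I would globalize along $S$. Let $S^0\subseteq S$ be the dense Zariski-open subset of points at which the generic transverse structure above holds. The hypothesis that $\F$ is non-dicritical along $S$ is exactly what rules out infinitely many separatrices on a surface through a general point --- this is the last of the equivalent characterizations of dicriticalness recalled in \S\ref{S:dicritical} --- so at each point of $S^0$ there are only finitely many germs of $\F$-invariant hypersurfaces. Analytically continuing $V_p$ along $S^0$ therefore traces out a finite branched covering, and the union $\widehat V$ of the finitely many branches in the monodromy orbit of $V_p$ is a well-defined, single-valued, $\F$-invariant analytic hypersurface in a neighborhood $U^0$ of $S^0$, containing both $\gamma$ and $S^0$. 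Finally, the degeneracy locus $S\setminus S^0$ has codimension at least one in $S$, hence codimension at least three in $X$; enlarging it to a closed analytic subset $A$ of a neighborhood $U$ of $S$ with $\codim_X A\ge 2$ containing all points where the construction breaks down, the set $\widehat V$ is a closed analytic subset of pure dimension $n-1$ of $U\setminus A$. Since $n-1>\dim A$, the Remmert--Stein extension theorem shows that $V:=\overline{\widehat V}$ is a closed analytic hypersurface of $U$. Invariance is a closed condition and passes to the closure, while $\gamma\subset V$ by construction and $S=\overline{S^0}\subset\overline{\widehat V}=V$, which is the desired local $\F$-invariant hypersurface.

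I expect the main obstacle to be the middle step: upgrading the germ-wise existence and uniqueness at $p$ into a genuine analytic hypersurface over all of $S^0$. One must know that the local separatrices are finite in number and depend holomorphically on the base point --- which is precisely where non-dicriticalness enters --- and that the resulting finite multisection is proper, so that its closure does not degenerate before the extension step can be applied. By comparison, the reduction to the transverse plane foliation at a general point and the concluding Remmert--Stein extension are comparatively routine.
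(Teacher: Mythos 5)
The paper does not prove this statement from scratch: it is obtained by quoting \cite[Theorem 2.1]{MR1179013} together with the proof of \cite[Theorem 5]{MR1162557}. There the semi-local hypersurface is produced structurally, from a reduction of singularities of $\F$ over a full neighborhood of $S$ and the resulting bijection between separatrices and connected components of the non-dicritical part of the singular locus upstairs; the hypersurface containing $\gamma$ and $S$ comes out defined on a uniform neighborhood of $S$ in one stroke. Your first step (local existence and uniqueness at the general point $p$ of the invariant hypersurface germ containing $\gamma$, via Cano--Cerveau and Cano--Mattei) is exactly what the paper also invokes, so that part is fine, although the claim that $V_p$ ``automatically'' contains the germ of $S$ deserves an argument: an invariant hypersurface germ through $p\in S$ could a priori meet nearby transverse planes in leaves avoiding the singular point, and ruling this out uses the genericity of $p$.

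The genuine gap is in your globalization step. From non-dicriticalness you get that the set of separatrix germs at each individual point of $S^0$ is finite, but this does not by itself give (a) that the germ $V_p$ continues analytically along every path in $S^0$, (b) that the number of branches is locally constant so that the continuation ``traces out a finite branched covering,'' or (c) --- most importantly --- a uniform lower bound on the radius of the polydiscs on which the separatrix germs converge as one moves along $S^0$ and approaches $S\setminus S^0$. Without (c) your $\widehat V$ need not be a \emph{closed} analytic subset of $U\setminus A$ for an honest open neighborhood $U$ of all of $S$ (the domain of $\widehat V$ may pinch to zero width near $A$), and then Remmert--Stein does not apply. You flag this yourself as ``the main obstacle,'' but it is not a technical afterthought: it is the entire content of the semi-local statement, and it is precisely what the reduction-of-singularities arguments of \cite{MR1179013,MR1162557} are designed to supply. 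As written, the proposal reduces the theorem to the part that remains unproved.
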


\section{Continuation of subvarieties}\label{S:Rossi}

\subsection{Rossi's Theorem}
We recall below the main result of \cite{MR0244516}. Its proof is analytic in nature and relies on an ingenious application of Hartog's Theorem.

\begin{thm}
Let $X$ be an irreducible subvariety of $\mathbb P^n$. Let $U\subset \mathbb P^n$ be an Euclidean neighborhood of $X$ and let $V$ be a local irreducible subvariety of $U$. If
\begin{enumerate}
\item $\dim V + \dim X > n$, and
\item every branch of  $V\cap X$ has dimension $\dim V + \dim X - n$
\end{enumerate}
then there exists a subvariety $\overline V$ of $\mathbb P^n$ such that $\overline V \cap U = V$.
\end{thm}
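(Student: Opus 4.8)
The plan is to deduce this statement from Theorem \ref{THM:Rossi}, which already produces the algebraic extension as a \emph{set containing} $V$; the extra hypothesis (2) will be used only to upgrade the containment $\overline V \cap U \supseteq V$ furnished by that theorem to the equality $\overline V \cap U = V$. Note first that hypothesis (2) forces $X \cap V \neq \emptyset$ (otherwise $V \cap X$ has no branches at all), so the hypotheses of Theorem \ref{THM:Rossi} are met. Applying it yields a subvariety of $\mathbb P^n$ of dimension $d := \dim V$ meeting $U$ in a set containing $V$; replacing it by the Zariski closure of $V$ in $\mathbb P^n$, which has the same dimension $d$, I may assume $\overline V$ is irreducible with $V$ Zariski dense in it.

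The content now is entirely the equality. Since $\overline V$ is irreducible of dimension $d$, its Euclidean-open subset $\overline V \cap U$ is a pure $d$-dimensional analytic subset of $U$, and $V$, being irreducible of dimension $d$ and contained in it, is exactly one of its irreducible components. Thus $\overline V \cap U = V$ holds if and only if $\overline V \cap U$ has no further component, i.e. if and only if $\overline V$ has no additional sheet re-entering $U$ away from $V$. Excluding such sheets is the analytic-continuation heart of the statement, and the step I expect to be the main obstacle: for a fixed $U$ nothing formal prevents the global variety $\overline V$ from passing through $U$ a second time, far from $X$, so hypothesis (2) must be invoked precisely here.

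To control the sheets I would bring in $X$ itself, exploiting that $U \supseteq X$ gives $\overline V \cap X = \overline V \cap X \cap U \subseteq \overline V \cap U$, so the global intersection $\overline V \cap X$ is distributed among the components of $\overline V \cap U$. Two classical facts constrain it. By the dimension theorem for intersections in projective space, every irreducible component of $\overline V \cap X$ has dimension at least $e := d + \dim X - n \ge 1$; by the Fulton--Hansen connectedness theorem, the hypothesis $\dim \overline V + \dim X > n$ guarantees that $\overline V \cap X$ is connected. Meanwhile hypothesis (2) says that, along $V$, the set $\overline V \cap X$ coincides with $V \cap X$ and is therefore pure of the minimal dimension $e$. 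The plan is to combine purity along $V$, the lower dimension bound, and connectedness to identify $\overline V \cap X$ with the continuation of $V \cap X$ as a single connected, pure $e$-dimensional set, and then to argue that each component of $\overline V \cap U$ which is Zariski dense in $\overline V$ is forced to carry part of this intersection.

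The delicate point — and where I expect to spend the real effort — is ruling out a spurious sheet $W$ of $\overline V \cap U$ whose trace $W \cap X$ is empty, since the projective dimension theorem controls $\overline V \cap X$ globally but says nothing about the local analytic piece $W \cap X$ individually. I expect this to require either shrinking $U$ to a connected tubular neighborhood of $X \cap \overline V$, on which the purity and connectedness above genuinely pin down $\overline V \cap U = V$, or a more careful analytic-continuation argument of Hartogs type along the lines of Rossi's original approach; it is exactly this exclusion that hypothesis (2) buys and that is deliberately avoided in the weaker conclusion $\overline V \cap U \supseteq V$ of Theorem \ref{THM:Rossi}.
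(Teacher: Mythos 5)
There is a genuine gap, and you have in fact flagged it yourself: the entire content of this statement beyond Theorem \ref{THM:Rossi} is the equality $\overline V \cap U = V$, i.e.\ the exclusion of extra components of $\overline V \cap U$, and your proposal leaves exactly that step unproven (``I expect this to require either shrinking $U$ \dots or a more careful analytic-continuation argument''). A plan whose central step is deferred is not a proof. Worse, the two tools you propose for it do not suffice. Shrinking $U$ is not available: the conclusion demands equality on the given neighborhood $U$, and replacing $U$ by a smaller tube proves a different statement. And the combination of Fulton--Hansen connectedness of $\overline V \cap X$ with the purity hypothesis (2) does not pin down the sheets: hypothesis (2) constrains only the branches of $V \cap X$, so a spurious component $W$ of $\overline V \cap U$ may meet $X$ in a set that adjoins, or is even contained in, $V \cap X$ without violating either connectedness or purity; and a component $W$ with $W \cap X = \emptyset$ is, as you note, untouched by the global dimension theorem. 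Your reduction to the Zariski closure is sound (since any algebraic set with $\overline V' \cap U = V$ contains the closure of $V$, the equality statement is equivalent to the closure itself meeting $U$ exactly in $V$), but this only sharpens what must be shown, it does not show it.

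For comparison: the paper does not prove this theorem at all --- it is quoted as Rossi's result \cite{MR0244516}, whose proof is analytic and rests on a Hartogs-type continuation argument, which is precisely where hypothesis (2) does its work. What the paper proves (Theorem \ref{T:Rossi}, via Hilbert schemes and automorphisms of $\mathbb P^n$) is the \emph{strengthened} statement dropping (2), at the cost of weakening the conclusion to $\overline V \cap U \supseteq V$. So the logical direction in the paper is the reverse of yours: the containment version is not a stepping stone toward the equality version, and nothing in the paper's machinery recovers the equality. To complete your argument you would essentially have to reproduce Rossi's original analytic continuation, not derive it formally from Theorem \ref{THM:Rossi}. (A minor additional point: hypothesis (2) is vacuously true when $V \cap X = \emptyset$, so the nonemptiness you need for Theorem \ref{THM:Rossi} is an implicit reading of the statement rather than a consequence of it; this is harmless but should be said.)
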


\subsection{A strengthening of  Rossi's Theorem}
At the introduction of  \cite{MR0244516}, Rossi remarks that he does not know
if condition (ii) is really necessary. A variant of the argument used in \cite[Proposition 6.6]{2011arXiv1107.1538L} shows that condition (ii) is indeed superfluous.

\begin{thm}[Theorem \ref{THM:Rossi} of Introduction]\label{T:Rossi}
Let $X$ be an irreducible subvariety of $\mathbb P^n$. Let $U$ be an Euclidean neighborhood of $X$ and let $V$ be a local irreducible subvariety of $U$. If $\dim V + \dim X > n$ and $X \cap V \neq \emptyset$ then there exists a subvariety $\overline V$ of $\mathbb P^n$ such that $\dim \overline V = \dim V$ and $\overline V \cap U \supseteq V$.
\end{thm}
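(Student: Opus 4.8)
The plan is to deduce the statement from the original version of Rossi's theorem recalled above (the one including hypothesis (ii)), arranging that hypothesis by replacing $X$ with a carefully chosen auxiliary variety $X'$. Since the conclusion of Rossi's theorem concerns only $V$ and $U$, any irreducible algebraic $X' \subseteq U$ with $\dim X' + \dim V > n$, with $V \cap X' \neq \emptyset$, and such that every branch of $V \cap X'$ has the expected dimension $\dim X' + \dim V - n$, will suffice: applying the original theorem to the triple $(X',U,V)$ produces an algebraic $\overline V$ with $\overline V \cap U = V$, whence $\dim \overline V = \dim V$ and $\overline V \cap U \supseteq V$. If $X = \mathbb P^n$ then $U = \mathbb P^n$ and $V$ is a closed analytic subvariety of $\mathbb P^n$, hence algebraic by Chow's theorem; so I may assume $X \subsetneq \mathbb P^n$ and set $e := \dim X$, $d := \dim V$, noting $1 \le e \le n-1$, $d \ge 2$, and $d + e > n$.

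I will take $X'$ to be a generic small translate of $X$ under the automorphism group, which automatically lies in $U$ and remains irreducible of dimension $e$; the only real content is to force a proper, yet nonempty, intersection with $V$. To guarantee non-emptiness I fix $p \in X \cap V$ and move $X$ only by the stabilizer $G_p \subset G := \Aut(\mathbb P^n) \cong \mathrm{PGL}_{n+1}(\mathbb C)$, so that $p \in gX \cap V$ for every $g \in G_p$. I then consider the incidence variety
\[
  J \;=\; \{(g,v) \in G_p \times V : v \in gX\}
\]
and study its two projections. Since $G_p$ acts transitively on $\mathbb P^n \setminus \{p\}$, for each $v \neq p$ the orbit map $g \mapsto g^{-1}v$ is a smooth surjection onto $\mathbb P^n \setminus \{p\}$; hence the fibre of $J \to V$ over a generic $v$ has dimension $\dim G_p - n + e$, whereas the fibre over $p$ is all of $G_p$. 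Consequently the component $J_0 := \overline{\{(g,v)\in J : v \neq p\}}$ has dimension $d + e + \dim G_p - n$ and, because $d + e - n \ge 1$, is the unique component of $J$ of maximal dimension.

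It remains to analyze the projection $\rho\colon J \to G_p$. For every $g \in G_p$ the intersection $V \cap gX$ contains $p$, so by the local intersection inequality in the smooth ambient space each branch of $V \cap gX$ has dimension at least $d + e - n \ge 1$; in particular $\dim(V \cap gX) \ge d+e-n$ for all $g$. On the other hand, if $J_0$ did not dominate $G_p$, then for generic $g$ the fibre $\rho^{-1}(g)$ would meet no maximal-dimensional component of $J$, forcing $\dim(V \cap gX) < d+e-n$ and contradicting this lower bound. Hence $J_0$ dominates $G_p$, and for $g$ in a dense Zariski-open subset $G_p^\circ$ one has $\dim(V \cap gX) = d+e-n$; combined with the branchwise lower bound, every branch of $V \cap gX$ then has dimension exactly $d+e-n$, which is Rossi's hypothesis (ii). Finally, by compactness of $X$ there is a Euclidean neighborhood $\Omega$ of the identity in $G_p$ with $gX \subseteq U$ for all $g \in \Omega$; as $G_p^\circ$ is Euclidean-dense it meets $\Omega$, and any $g \in G_p^\circ \cap \Omega$ yields the desired $X' := gX$, to which the original Rossi theorem applies. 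The crux of the argument is exactly this last point: arranging that a small generic perturbation of $X$ makes the possibly excessive intersection with $V$ proper without rendering it empty — which is precisely what the stabilizer trick, together with the local dimension inequality, secures.
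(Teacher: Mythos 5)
Your proof is essentially correct, but it follows a genuinely different route from the paper. You reduce the strengthened statement to Rossi's original theorem by showing that hypothesis (ii) can always be \emph{achieved} after replacing $X$ by a generic translate $gX$ under the stabilizer $G_p$ of a point $p\in X\cap V$ --- a Kleiman-type moving lemma adapted to the non-homogeneous situation where both $X$ and $V$ must keep containing $p$. The paper never invokes Rossi's theorem as a black box: it also moves $X$ by automorphisms fixing $p$, but uses the resulting positive-dimensional components of $\varphi^*X\cap V$ for the opposite purpose, namely to sweep out a full neighborhood of a generic point of $V$ by projective subvarieties through $p$ contained in $V$; the algebraization $\overline V$ is then produced directly as the image of the universal family over a closed subset $\Sigma(V,p)$ of a component of $\Hilb(\mathbb P^n)$. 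Your reduction is shorter and answers Rossi's own question about the necessity of (ii) in perhaps the most transparent way; the paper's argument buys independence from the Hartogs-based analytic proof and an explicit algebro-geometric description of $\overline V$, which is what motivates the author's Problem about formal completions.

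One step deserves more care than you give it. The incidence set $J$ is only an \emph{analytic} subset of $G_p\times U$ (since $V$ is not algebraic), so there is no ``dense Zariski-open $G_p^\circ$'' and the algebraic theorem on dimensions of fibres does not apply verbatim. What you need is that the set of $g$ for which some branch of $V\cap gX$ has dimension $>d+e-n$ has dense complement near the identity. This is true, but requires the analytic version of the argument: the jump locus $Z=\{a\in J_0:\dim_a\rho^{-1}(\rho(a))>d+e-n\}$ is a proper analytic subset of the irreducible $J_0$ (properness following from a Baire-category argument, since otherwise every local image of $\rho$ would have dimension $<\dim G_p$ while $\rho(J')=G_p$), every nonempty fibre of $\rho|_Z$ contains a branch of $\rho^{-1}(g)$ of dimension $\ge d+e-n+1$ which lies entirely in $Z$, and hence by Remmert's rank theorem $\rho(Z)$ is a countable union of locally analytic sets of dimension $\le\dim G_p-2$. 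Its complement is therefore dense, which is all you need to find one good $g$ with $gX\subseteq U$. With this point made explicit the proof is complete.
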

\begin{proof}
Fix once and for all a closed point  $p \in X\cap V$. For any irreducible component $\Sigma$ of the Hilbert scheme of $\mathbb P^n$, let $\Sigma(V,p)$ be the
subset corresponding to subschemes containing $p$ and with formal completion at $p$
contained in $V$. By definition, $\Sigma(V,p)$ is the  intersection of the
Zariski closed subsets  $\Sigma_k (V,p) \subset \Hilb(\mathbb P^n)$ corresponding to subschemes containin $p$ and with $k$-th infinitesimal neighborhood at $p$ contained at the $k$-th infinitesimal neighborhood of $V$ at $p$. It follows that $\Sigma(V,p) \subset \Hilb(\mathbb P^n)$ is a Zariski closed subset.

Let  $\mathcal U_{\Sigma} \subset \Sigma \times \mathbb P^n \to \Sigma$ be the universal family of subschemes parametrized by $\Sigma$. We will denote by $\mathcal U_{\Sigma(V,p)}$  the restriction of the universal family to $\Sigma(V,p)$ and by $q : \mathcal U_{\Sigma(V,p)} \to \mathbb P^n$  the natural projection to $\mathbb P^n$. The proof will go by showing that $\overline V$ can be chosen to be equal to  $q(\mathcal U_{\Sigma(V,p)})$ for a suitably irreducible component $\Sigma$ of the Hilbert scheme of $\mathbb P^n$.

Let us fix a metric on $\mathbb P^n$. The subvariety $X$ is a compact subset of the open set $U$. As such, it rests at a positive distance $c >0$
from the boundary of $U$, i.e. $d(X, \partial U)= c> 0$. Therefore, there exists an open neighborhood $W \subset \Aut(\mathbb P^n)$ of the identity such that $d(\varphi^*X, \partial U) \ge c/2$ for any $\varphi \in W$. Notice that every irreducible component of $\varphi^*X \cap V$ is a projective subvariety of dimension at least $\dim X + \dim V - n> 0$ contained in $V$.

Let $W_p \subset W$ be the subset consisting of automorphisms which fix $p$. Let $\varphi_0 \in W_p$ be an automorphism for which there exists an irreducible component $E$ of $\varphi_0^* X \cap V$ containing $p$ and with minimal dimension among all irreducible components of $\varphi^* X \cap V$ containing $p$ for $\varphi \in W_p$.  If $q \in E$ is a point different from $p$ and not contained in any other irreducible  component of $\varphi_0^* X \cap V$, then varying $\varphi \in W_p$ and considering
the image $\varphi ( \varphi_0^{-1}(q))$  we obtain a full neighborhood $N_q$ of $q$ inside $V$. By construction this neighborhood is filled up by irreducible projective subvarieties containing $p$ and contained in $V$. We deduce the existence of an  irreducible component $\Sigma \subset \Hilb(\mathbb P^n)$  with  general element in $\Sigma(V,p)$ corresponding to an irreducible subvariety of $V$ and such that the   morphism
$q : \mathcal U_{\Sigma(V,p)} \to \mathbb P^n$ maps an analytic neighborhood of $\Sigma \times \{ p\} \subset \mathcal U_{\Sigma(V,p)}$
inside $V$. Moreover, the image of such analytic neighborhood also contains $N_q$. It follows that
$q(\mathcal U_{\Sigma(V,p)})$ is the sought algebraization of $V$.
\end{proof}

It was pointed out by Kebekus that the result above is probably not  formulated in its most general/natural form. The use of the generic $2$-transitiveness of the automorphism group of $\mathbb P^n$ should  be replaced by an abundance of deformations of $X$ inside the ambient manifold. We do not pursue this line of reasoning here. Anyway, we do believe that a better understanding of the mechanisms leading to the validity of the result should be pursued. In particular, a  more intrinsic, argument  might help answering the following natural problem.

\begin{problem}
Can one replace $U$, respectively $V$, in the statement of Theorem \ref{T:Rossi} by $\widehat U$ the formal completion of $\mathbb P^n$ along $X$, respectively  a  formal subvariety $\mathscr V \subset \widehat U$ ?
\end{problem}

\subsection{A variant of Rossi's Theorem} The hypothesis of Theorem \ref{T:Rossi} are never satisfied when the ambient is $\mathbb P^3$. The variant of it below, in contrast, can also be applied for analytic subvarieties of $\mathbb P^3$.

\begin{thm}\label{T:Rossibis}
Let $X$ be an irreducible subvariety of $\mathbb P^n$. Let $U$ be an Euclidean neighborhood of $X$ and let $V_1$ and $V_2$  be local irreducible subvarieties of $U$. If $V_1 \cap V_2 = X$ and $\dim V_1 + \dim V_2 > n$
then there exists a subvarieties $\overline V_1$ and $\overline V_2$ of $\mathbb P^n$ such that $\dim \overline V_i = \dim V_i$ and $\overline V_i \cap U \supseteq V_i$ for $i =1 ,2$.
\end{thm}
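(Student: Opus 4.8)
The plan is to reduce Theorem \ref{T:Rossibis} to the already-established Theorem \ref{T:Rossi} by engineering a situation where the dimension hypothesis $\dim V + \dim X > n$ is met. The obstacle in $\mathbb P^3$ is precisely that this inequality cannot hold for a single pair $(X, V)$ with both positive-dimensional and $X \subsetneq V$, which is why we are handed \emph{two} local subvarieties $V_1, V_2$ whose intersection is $X$. The key idea I would pursue is to apply the Hilbert-scheme deformation argument of Theorem \ref{T:Rossi} not to $X$ itself, but to the pair: I want to use $V_1$ as the ``anchor'' subvariety playing the role of $X$ when algebraizing $V_2$, and symmetrically $V_2$ when algebraizing $V_1$.

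\medskip

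\emph{First step.} Fix a closed point $p \in X = V_1 \cap V_2$. Because $\dim V_1 + \dim V_2 > n$, the pair $(V_1, V_2)$ satisfies the dimension inequality that $(X,V)$ satisfied in Theorem \ref{T:Rossi}. The difficulty is that Theorem \ref{T:Rossi} as stated requires the anchor $X$ to be a \emph{global} projective subvariety, whereas here $V_1$ is only a local subvariety of $U$. So I cannot invoke Theorem \ref{T:Rossi} verbatim with $V_1$ in the role of $X$. Instead I would revisit the proof of Theorem \ref{T:Rossi} and isolate what is genuinely needed of the anchor: the argument uses the compactness of $X$ and an abundance of deformations $\varphi^* X$ (via $\Aut(\mathbb P^n)$) sweeping out a neighborhood inside $V$. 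What I actually need is a supply of compact projective subvarieties, close to $p$, that sweep out an open neighborhood of $p$ inside $V_2$ and whose dimension adds to $\dim V_2$ to exceed $n$.

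\medskip

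\emph{Second step.} To produce such a supply I would first algebraize $V_1$ itself using the global $X$ as anchor where possible, or more robustly, I would run the following bootstrap. Note that every irreducible component of $\varphi^* X \cap V_i$, for $\varphi$ in a small neighborhood $W$ of the identity in $\Aut(\mathbb P^n)$, is a \emph{projective} subvariety contained in $V_i$ of dimension at least $\dim X + \dim V_i - n$. These global pieces, lying inside $V_1$ and inside $V_2$ respectively and passing near $p$, are exactly the compact anchors I want. The plan is then to run the Hilbert-scheme construction of Theorem \ref{T:Rossi} twice: once taking as anchor a component of $\varphi_0^* X \cap V_1$ inside $V_1$ to algebraize $V_2$, and once taking a component inside $V_2$ to algebraize $V_1$. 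The dimension bookkeeping is the crux: I must check that the anchor found inside $V_1$ has dimension large enough that, when added to $\dim V_2$, the total still exceeds $n$, so that the sweeping-out argument of Theorem \ref{T:Rossi} applies and yields an irreducible component $\Sigma \subset \Hilb(\mathbb P^n)$ with $q(\mathcal U_{\Sigma(V_2,p)})$ an algebraization $\overline V_2$ of dimension $\dim V_2$; symmetrically for $\overline V_1$.

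\medskip

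\emph{Main obstacle.} I expect the hard part to be the dimension estimate in the bootstrap, ensuring the global anchor component sitting inside $V_1$ is not too small. A priori $\varphi^* X \cap V_1$ could meet $V_1$ in something of dimension merely $\dim X + \dim V_1 - n$, which combined with $\dim V_2$ need not exceed $n$. Overcoming this will likely require choosing the anchor inside $V_1$ to be not a component of $\varphi^* X \cap V_1$ but rather the already-constructed algebraization of $V_1$, or a positive-dimensional algebraic subvariety of $V_1$ through $p$ of dimension close to $\dim V_1$; the hypothesis $V_1 \cap V_2 = X$ should be used to control how these global pieces intersect so that the minimal-dimension component selection in the style of Theorem \ref{T:Rossi} (the choice of $\varphi_0$ minimizing the dimension of the component through $p$) behaves well. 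Once a sufficiently large global anchor inside each $V_i$ is secured, the remainder is a direct transcription of the Hilbert-scheme argument already carried out for Theorem \ref{T:Rossi}.
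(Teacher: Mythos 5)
Your first step actually identifies the correct move --- use $V_1$ as the ``anchor'' when algebraizing $V_2$, and vice versa --- but you then abandon it for the wrong reason and substitute an argument that does not work. The paper's proof is precisely the idea you rejected: one considers $g^* V_1 \cap V_2$ for $g \in \Aut(\mathbb P^n)$ close to the identity. What you miss is that the proof of Theorem \ref{T:Rossi} never needs the anchor itself to be compact; it only needs the intersections of the deformed anchor with the target to be compact analytic sets (hence projective, by Chow) of positive dimension. That is exactly what the hypothesis $V_1 \cap V_2 = X$ buys: since $X$ is compact and sits at positive distance from $\partial U$, for $g$ sufficiently close to the identity the intersection $g^* V_1 \cap V_2$ cannot escape to the boundary (after shrinking $U$ slightly, any limit point of such intersections as $g \to \mathrm{id}$ lies in $V_1 \cap V_2 = X$), so it is a compact analytic subset of $\mathbb P^n$, hence a projective subvariety, each component of which has dimension at least $\dim V_1 + \dim V_2 - n > 0$ and is contained in $V_2$. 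These are the positive-dimensional projective pieces that sweep out a neighborhood of a general point of $V_2$, and the Hilbert-scheme argument of Theorem \ref{T:Rossi} then runs verbatim; symmetrically for $V_1$.

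Your actual fallback --- anchoring on components of $\varphi^* X \cap V_i$ --- fails for the reason you yourself flag: the statement gives no lower bound on $\dim X$, so $\dim X + \dim V_i - n$ may be nonpositive (this is exactly the intended situation in $\mathbb P^3$, with $X$ a curve and $V_1, V_2$ surfaces), and for generic $\varphi$ near the identity $\varphi^* X \cap V_i$ may be finite or empty. The remedies you sketch are either circular (using ``the already-constructed algebraization of $V_1$'' in order to construct it) or unsubstantiated (no construction is offered for a high-dimensional algebraic subvariety of $V_1$ through $p$). So, as written, the proposal has a genuine gap at its central step, even though the correct mechanism is named --- and then discarded --- in your first paragraph.
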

\begin{proof}
The proof follows the same lines of the proof of Theorem \ref{T:Rossi}. The only difference is that we now
consider $g^* V_i \cap V_j$, for $g \in \Aut(\mathbb P^n)$ sufficiently  close to the identity, in order to produce many positive dimensional projective subvarieties contained in $V_j$.
\end{proof}

\section{Algebraic separatrices for non-dicritical foliations}\label{S:proofs}
\subsection{Existence of algebraic separatrices (Proof of Theorem \ref{THM:Algebraic})}
We first recall that the singular set of any codimension one foliation on $\mathbb P^n$, $n \ge 2$, has an irreducible component of codimension $2$, see for instance \cite[Proposition 2.6, page 95]{MR537038}. Let $S \subset \mathrm{sing}(\F)$ be one such irreducible component, and let $p \in S$ be a sufficiently general point. Let $\Sigma \subset \mathbb P^4$ be a linearly embedded $\mathbb P^2$ passing through $p$. If $\Sigma$ is sufficiently general then $\Sigma$ is not $\F$-invariant and, consequently, the restriction of $\F$ to $\Sigma$ defines a foliation $\mathcal G$ on $\Sigma$.

Theorem \ref{T:CamachoSad} guarantees the existence of a curve $\gamma \subset \Sigma$  everywhere tangent to $\mathcal G$. But then $\gamma$ is clearly every tangent to $\F$ and we can apply Theorem \ref{T:semilocal} to produce an open neighborhood $U$ of $S$ and an analytic subvariety $V \subset U$ containing $S$ which is left  invariant by $\F$. We conclude the proof by applying Theorem \ref{THM:Rossi}. \qed

\subsection{Characterization of non-dicritical logarithmic foliations (Proof of Theorem \ref{THM:Logarithmic})}
As in the proof of Theorem \ref{THM:Algebraic}, let $\Sigma$ be a sufficiently general $\mathbb P^2$ linearly embedded in $\mathbb P^4$ and let $\mathcal G$ be the restriction of $\mathcal F$ to $\Sigma$.
According to \cite[Lemma 10]{MR1185124}, the singular set of $\G$ coincides with the union of $\sing(\F)\cap \Sigma$ and finitely many extra singularities.

 The non-dicriticalness of  $\F$, combined with Darboux-Jouanolou Theorem \cite[Theorem 3.3, page 102]{MR537038} (see also \cite{MR1753461}), implies that $\F$ has finitely many invariant algebraic hypersurfaces. Since $\Sigma$ is general, we can assume that it intersects the smooth locus of the invariant algebraic hypersurfaces transversely. Therefore the separatrices through $p \in \sing(\G) - \sing(\F)\cap \Sigma$ are not contained in $\F$-invariant algebraic hypersurfaces.

 Let $\pi: \tilde \Sigma \to \Sigma$ be the composition of a reduction of singularities of $\G$ with one extra blow-up at every  point in $\sing(\F) \cap \Sigma$  and let $\tilde \G = \pi^* \G$ be the resulting reduced foliation. The extra blow-ups  guarantee that any irreducible curve invariant by $\tilde \G$ is smooth. Since, by assumption, $\tilde \G$ has no saddle-nodes, every singularity of $\tilde \G$ is at the intersection of two germs of convergent separatrices.

The proof of Theorem \ref{THM:Algebraic} shows that every separatrix of $\G$ through a point $p \in \sing(\F) \cap \Sigma$ is algebraic.
Let $\mathscr C= \{ C_1, \ldots, C_k \}$  be the collection of all irreducible $\tilde \G$-invariant algebraic curves which are either an irreducible component of the exceptional divisor or are irreducible components of strict transform of the intersection of an $\F$-invariant algebraic hypersurface with $\Sigma$. Notice that the singular set $\sing(\tilde \G)\cap C_i$ coincides with the intersection of $C_i$ with the divisor $\sum_{i\neq j} C_j$.

Recall from \cite[Chapter 2, Proposition 3]{MR2114696} the formula
\[
    N_{\tilde \G} \cdot C_i = C_i^2 + Z(\tilde \G, C_i) \, .
\]
In our situation, we can write $Z(\tilde \G, C_i)  = (\sum_{j\neq i} C_j ) \cdot C_i$ and deduce that
\begin{equation}\label{E:unica}
    N_{\tilde \G} \cdot C_i = (\sum_{j=1}^ k C_j) \cdot C_i \, .
\end{equation}
for every curve $C_i$, $i =1, \ldots, k$.

The Picard group of $\tilde \Sigma$ is generated by the line bundles associated to the exceptional divisors and by the pull-back of $\mathcal O_{\mathbb P^2}(1)$. Therefore the curves in $\mathscr C$ generate a finite index subgroup of the Picard group of $\tilde \Sigma$.  We deduce that $N_{\tilde \G}$ and $\mathcal O_{\tilde \Sigma}(\sum_{j=1}^ k C_j)$ are isomorphic line bundles because they have equal intersection numbers with every curve in $\mathscr C$ according to (\ref{E:unica}) and the intersection form is non-degenerate in $\Pic(\tilde \Sigma)$.  It follows that $\tilde \G$ is defined by a logarithmic $1$-form with poles on the simple normal crossing divisor $\sum_{i=1}^k C_i$. A result by Deligne \cite[(3.2.14)]{MR0498551} guarantees that such logarithmic $1$-form is closed.  Consequently, the  foliation $\G$ is  defined by a closed logarithmic $1$-form. Since $\Sigma$ is generic,  it follows that   $\F$ is also defined by a closed logarithmic $1$-form, see \cite{MR704017}, \cite[Lemma 9]{MR1185124} or \cite[Appendix A]{2015arXiv151206623C}.  \qed

\subsection{Results for foliations on $\mathbb P^3$}\label{S:dim3}
One can try to replace  Theorem \ref{T:Rossi} by   Theorem \ref{T:Rossibis} in order to establish analogues of Theorems \ref{THM:Algebraic} and \ref{THM:Logarithmic} for foliations on $\mathbb P^3$. The only obstruction in order to do so is the presence of codimension two components of the singular set contained in exactly one semi-local separatrix. This may happen because over a general transverse section the germ of foliation has  a singularity with only one irreducible separatrix (e.g. Poincar\'e-Dulac singularities), or  due to a transitive monodromy action on the set of the separatrices on a two-dimensional section.

The argument used to prove Theorem \ref{THM:Algebraic} implies the
following result.

\begin{prop}
Let $\F$ be a codimension one foliation on $\mathbb P^3$. If $\F$
is non-dicritical then $\F$ leaves invariant an algebraic hypersurface, or
each irreducible component of the singular set of dimension two is contained in
exactly one convergent separatrix.
\end{prop}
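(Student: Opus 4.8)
The plan is to rerun the argument of Theorem~\ref{THM:Algebraic}, substituting the variant Theorem~\ref{T:Rossibis} for Theorem~\ref{T:Rossi}, whose dimensional hypothesis $\dim V + \dim X > n$ can never be met when $n=3$, $X$ a curve, and $V$ a surface. Suppose, for contradiction, that $\F$ has no invariant algebraic hypersurface, and let $S \subset \sing(\F)$ be a codimension two (hence one-dimensional) irreducible component. Exactly as in the proof of Theorem~\ref{THM:Algebraic}, a general $\mathbb P^2$ through a general point of $S$ together with Theorem~\ref{T:CamachoSad} produces a germ of curve tangent to $\F$ but not contained in $\sing(\F)$, and Theorem~\ref{T:semilocal} then yields an open neighborhood $U$ of $S$ together with a local $\F$-invariant hypersurface containing $S$. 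Thus every codimension two component is contained in at least one convergent semi-local separatrix.

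I would then analyze how many distinct semi-local separatrices contain a fixed component $S$. Over a general point $p \in S$ the restriction of $\F$ to a transverse two-dimensional disk has an isolated singularity with finitely many separatrix branches; the monodromy representation of the fundamental group of the smooth locus of $S$ permutes these branches, and by the correspondence of Cano--Cerveau \cite[Theorem 2.1]{MR1179013} together with Theorem~\ref{T:semilocal} the orbits of this action are in bijection with the convergent semi-local separatrices through $S$. Hence $S$ lies in exactly one convergent separatrix precisely when this action is transitive, or when there is a single branch as for a Poincar\'e--Dulac singularity, which is exactly the obstruction described before the statement.

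The decisive case is when some component $S$ is contained in two distinct convergent semi-local separatrices $V_1$ and $V_2$. These are local irreducible hypersurfaces in a neighborhood $U$ of $S$, so $\dim V_1 + \dim V_2 = 4 > 3 = n$, and both contain the irreducible curve $S$. After shrinking $U$ so that $V_1 \cap V_2$ has no component other than $S$, one applies Theorem~\ref{T:Rossibis} with $X = S$ to obtain irreducible algebraic hypersurfaces $\overline V_1, \overline V_2 \subset \mathbb P^3$ with $\overline V_i \cap U \supseteq V_i$ and $\dim \overline V_i = 2$. Since each $V_i$ is Zariski dense in the irreducible variety $\overline V_i$, the tangency condition defining $\F$-invariance, which holds on the open piece $V_i$, propagates to all of $\overline V_i$; thus the $\overline V_i$ are $\F$-invariant algebraic hypersurfaces, contradicting our assumption. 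Therefore no component is contained in two or more convergent separatrices, and combined with the previous paragraph every codimension two component is contained in exactly one.

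I expect the main obstacle to be verifying the hypothesis $V_1 \cap V_2 = X$ of Theorem~\ref{T:Rossibis}, that is, arranging the two semi-local separatrices to meet \emph{precisely} along the irreducible curve $S$. Near a general point of $S$ the two separatrix branches of the transverse foliation meet only at the singular point, so $V_1$ and $V_2$ already meet along $S$ over the general part of $S$; the difficulty is to rule out, or to absorb, extra one-dimensional components of $V_1 \cap V_2$ lying over the special points of $S$, presumably by shrinking $U$ or by replacing $S$ with the actual irreducible intersection curve and checking it still satisfies the hypotheses of Theorem~\ref{T:semilocal}.
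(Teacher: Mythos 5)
Your proposal follows essentially the same route as the paper: run the argument of Theorem~\ref{THM:Algebraic} to produce semi-local separatrices along a one-dimensional component $S$ of $\sing(\F)$, and when two distinct ones exist, algebraize them via Theorem~\ref{T:Rossibis} applied with $X=S$; the obstruction you identify (a single branch or transitive monodromy) is exactly the one the paper describes. The technical point you flag about arranging $V_1\cap V_2=S$ precisely is real but is also left implicit in the paper, so your reconstruction is faithful to the intended argument.
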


Similarly, the arguments leading to Theorem \ref{THM:Logarithmic} also lead to the following result.

\begin{prop}
Let $\F$ be a codimension one foliation on $\mathbb P^3$. Assume that $\F$
is non-dicritical and that  each one-dimensional irreducible component of its
singular set  is contained in at least two convergent separatrices.  If
the restriction of $\F$ to a general $\mathbb P^2$ does not have saddle nodes in its resolution of singularities then $\F$ is defined by a closed logarithmic $1$-form.
\end{prop}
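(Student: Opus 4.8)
The plan is to run the proof of Theorem \ref{THM:Logarithmic} essentially verbatim, replacing the single step where algebraicity of separatrices is deduced from Rossi's Theorem \ref{T:Rossi} by an application of its variant Theorem \ref{T:Rossibis}. Recall that in $\mathbb P^3$ the hypotheses of Theorem \ref{T:Rossi} can never be met: a semi-local invariant hypersurface $V$ through a one-dimensional component $S$ of $\sing(\F)$ has $\dim V + \dim S = 2 + 1 = 3 = n$, so $\dim V + \dim S > n$ fails. The remedy is to use two separatrices at once: for invariant surfaces $V_1,V_2$ meeting along $S$ one has $\dim V_1 + \dim V_2 = 2+2 = 4 > 3$, which is exactly the numerical hypothesis of Theorem \ref{T:Rossibis}. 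The assumption that each one-dimensional component of $\sing(\F)$ lies in at least two convergent separatrices is what makes such a pair available, so this hypothesis enters the argument at precisely one point.

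First I would produce the algebraic separatrices. Let $S$ be a one-dimensional irreducible component of $\sing(\F)$ and let $p\in S$ be a general point. By hypothesis $S$ is contained in two distinct convergent separatrices; applying the semi-local Theorem \ref{T:semilocal} to two distinct germs of $\F$-tangent curves at $p$ (produced, as in the proof of Theorem \ref{THM:Algebraic}, from Camacho--Sad applied to a general plane section), I obtain an Euclidean neighborhood $U$ of $S$ and two distinct local $\F$-invariant surfaces $V_1, V_2 \subset U$, each containing $S$. After shrinking $U$ to a neighborhood of a general point of $S$ so that $V_1$ and $V_2$ meet exactly along $S$, I set $X = S$ and invoke Theorem \ref{T:Rossibis}: since $V_1\cap V_2 = X$ and $\dim V_1 + \dim V_2 > 3$, there exist algebraic surfaces $\overline V_1, \overline V_2 \subset \mathbb P^3$ with $\overline V_i \cap U \supseteq V_i$. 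Each $\overline V_i$ is irreducible and contains a Euclidean-open $\F$-invariant subset, hence is itself $\F$-invariant. Thus $S$ lies on $\F$-invariant algebraic hypersurfaces, and, pairing any separatrix through $S$ with a second one, every separatrix of $\F$ through a general point of $S$ is algebraic. (This is exactly the mechanism of the preceding Proposition.)

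With algebraicity of the separatrices through $\sing(\F)$ secured, the remainder is identical to the proof of Theorem \ref{THM:Logarithmic}. I would restrict $\F$ to a general plane $\Sigma = \mathbb P^2 \subset \mathbb P^3$, set $\G = \F|_{\Sigma}$, and pass to the reduced foliation $\tilde\G = \pi^* \G$ on $\tilde\Sigma$ obtained from a reduction of singularities of $\G$ followed by one extra blow-up over each point of $\sing(\F)\cap\Sigma$. As before the extra blow-ups force every invariant curve to be smooth, and the no-saddle-node hypothesis forces every singularity of $\tilde\G$ to be a transverse crossing of two convergent separatrices; those over $\sing(\F)\cap\Sigma$ are crossings of curves that are algebraic by the previous paragraph, while the separatrices of the finitely many extra singularities are, by Darboux--Jouanolou finiteness of invariant hypersurfaces together with the genericity of $\Sigma$, not contained in any invariant algebraic hypersurface and hence meet none of the relevant algebraic curves. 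Collecting the invariant algebraic curves $\mathscr C = \{C_1,\dots,C_k\}$ (exceptional components and strict transforms of $\overline V_i\cap\Sigma$), the index formula $N_{\tilde\G}\cdot C_i = C_i^2 + Z(\tilde\G,C_i) = (\sum_j C_j)\cdot C_i$ holds exactly as in (\ref{E:unica}). Since $\mathscr C$ generates a finite-index subgroup of $\Pic(\tilde\Sigma)$ and the intersection form is non-degenerate, $N_{\tilde\G}\cong \mathcal O_{\tilde\Sigma}(\sum_j C_j)$, so $\tilde\G$, and therefore $\G$, is defined by a logarithmic $1$-form with poles on the simple normal crossing divisor $\sum C_i$, which is closed by Deligne's theorem; the genericity of $\Sigma$ then propagates this conclusion to $\F$.

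The step I expect to be the main obstacle is the verification that the two local invariant surfaces can be arranged to satisfy the equality $V_1\cap V_2 = X$ demanded by Theorem \ref{T:Rossibis}, rather than mere containment: one must exclude excess intersection so that the common locus is precisely the irreducible curve $S$. The two-separatrix hypothesis guarantees $V_1\neq V_2$, so $V_1\cap V_2$ is a curve containing $S$; the point is to shrink $U$ to a tubular neighborhood of a general point of $S$ and to use that $S$ is an irreducible component of $\sing(\F)$ in order to rule out spurious components. The companion difficulty, namely controlling the extra singularities of the plane section, is already present for $n\ge 4$ and is handled verbatim as in Theorem \ref{THM:Logarithmic}, contributing nothing new in dimension three.
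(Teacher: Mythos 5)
Your proposal is correct and follows exactly the route the paper intends: the paper's own ``proof'' is just the remark that the arguments for Theorem \ref{THM:Logarithmic} go through once Theorem \ref{T:Rossi} is replaced by Theorem \ref{T:Rossibis}, with the two-convergent-separatrices hypothesis supplying the pair $V_1,V_2$ with $\dim V_1+\dim V_2=4>3$. You have in fact spelled out more detail than the paper does, including the (legitimate) point that one must arrange $V_1\cap V_2$ to be exactly $S$ near a general point of that component.
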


The proposition above gives an alternative proof of the stability of logarithmic foliations
on $\mathbb P^n$, $n\ge 3$, a result established by Calvo-Andrade in a more general context, see \cite{MR1286897}. It suffices to observe that  deformations of general logarithmic foliations are non-dicritical and have one-dimensional irreducible components of the singular set contained in exactly two  separatrices.

\bibliographystyle{plain}

\end{document}